\documentclass[11pt,reqno]{amsart}
\usepackage[T1]{fontenc}
\usepackage{lmodern}
\usepackage[a4paper,textwidth=160mm,textheight=232mm,centering]{geometry}
\usepackage{amssymb,mathtools,mathrsfs,microtype}
\usepackage[hidelinks]{hyperref}
\usepackage{enumitem}
\hypersetup{
 pdftitle={Equal sums and close divisors},
 pdfsubject={Entropy thresholds and the spacing of divisors},
 pdfkeywords={close divisors, logarithmic random sets, entropy, subset sums}
}
\allowdisplaybreaks[1]
\theoremstyle{plain}
\newtheorem{theorem}{Theorem}

\newtheorem{lemma}[theorem]{Lemma}
\newtheorem{conjecture}[theorem]{Conjecture}

\theoremstyle{definition}
\newtheorem{definition}[theorem]{Definition}

\theoremstyle{remark}

\newcommand{\N}{\mathbb N}
\newcommand{\Q}{\mathbb Q}
\newcommand{\R}{\mathbb R}
\newcommand{\Z}{\mathbb Z}
\newcommand{\Prob}{\mathbb P}
\newcommand{\E}{\mathbb E}
\newcommand{\zero}{\mathbf{0}}
\newcommand{\one}{\mathbf{1}}
\newcommand{\A}{\mathbf A}
\newcommand{\V}{\mathcal V}
\newcommand{\W}{\mathcal W}
\newcommand{\cc}{\mathbf c}
\newcommand{\mm}{\boldsymbol\mu}
\newcommand{\supp}{\operatorname{supp}}

\newcommand{\diam}{\operatorname{diam}}
\newcommand{\eps}{\varepsilon}
\title[Equal subset sums and close divisors]{Equal subset sums and close divisors}

\author{Jianfeng Hou}
\address{Center for Discrete Mathematics,
Fuzhou University, Fuzhou, Fujian, China}
\email{jfhou@fzu.edu.cn}
\author{Hongbin Zhao}
\address{Center for Discrete Mathematics, Fuzhou University, Fuzhou, Fujian, China}
\email{hbzhao2024@163.com}
\date{}
\subjclass[2020]{Primary 11N25; Secondary 11B30.}
\keywords{Close divisors, equal subset sums, random sets, entropy.}
\begin{document}
\begin{abstract}
For $k\geq2$, let $\alpha_k$ be the supremum of the exponents $a$ for which almost every integer $n$ has $k$ distinct divisors in a multiplicative interval of relative length $(\log n)^{-a}$. Select each positive integer $i$ independently with probability $1/i$, forming a random set $\A$, and let $\beta_k$ be the supremum of the $c<1$ for which, with probability tending to one as $D\to\infty$, the set $\A\cap[D^c,D]$ has $k$ distinct subsets with the same sum. We prove that $\alpha_k=\beta_k/(1-\beta_k)$, resolving a conjecture of Ford, Green and Koukoulopoulos [Invent.\ Math.\ \textbf{232} (2023), 1027--1160]. We also prove that their weak and strict entropy thresholds coincide. The proof combines flag refinement and entropy concavity with an upper bound for approximate subset sums that is uniform in arbitrary translations. A model with independent geometric prime exponents then transfers this bound to divisors.
\end{abstract}
\maketitle

\section{Introduction}

For an integer $k\geq2$, let $\alpha_k$ be the supremum of the real numbers $a$ such that, for almost every integer $n\geq2$, there are divisors $d_1<\cdots<d_k$ of $n$ satisfying
\[
 d_k\leq d_1\bigl(1+(\log n)^{-a}\bigr).
\]
Here \emph{almost every} refers to natural density: the number of exceptions up to $x$ is $o(x)$.
The problem is to determine how tightly a prescribed number of divisors can be packed for almost all integers.

For $k=2$, Erd\H{o}s conjectured that $\alpha_2=\log3-1$.
This was proved by Erd\H{o}s and Hall \cite{EH} (upper bound) and Maier and Tenenbaum \cite{MT84} (lower bound). Maier and Tenenbaum \cite{MT85,MT09} subsequently studied larger clusters by combining close divisors from successive ranges of prime factors. Their work gives lower bounds for general $k$, as well as the upper bound $\alpha_k\leq(\log2)/(k+1)$ for $k\geq3$ \cite{MT09}. A systematic account of these problems can be found in Hall and Tenenbaum \cite{HT}.

Ford, Green and Koukoulopoulos \cite{FGK} developed a framework for constructing close divisors that allows for general patterns of shared prime factors. The underlying problem concerns equal subset sums in a random set $\A\subset\N$, where each integer $i$ is included independently with probability $1/i$.
These probabilities reflect the relation $\sum_{e^i<p\leq e^{i+1}}1/p\sim1/i$ as $i\to\infty$, with the sum taken over primes.
For $k\geq2$, let $\beta_k$ be the supremum of the numbers $c<1$ for which, with probability tending to one as $D\to\infty$, there are pairwise distinct subsets $B_1,\ldots,B_k\subset\A\cap[D^c,D]$ satisfying $\sum_{a\in B_1}a=\cdots=\sum_{a\in B_k}a$.
Ford, Green and Koukoulopoulos \cite[Theorem~6]{FGK} proved that
\begin{equation}\label{eq:known-lower}
 \alpha_k\geq\frac{\beta_k}{1-\beta_k},
\end{equation}
and posed the following equality conjecture.

\begin{conjecture}[Ford--Green--Koukoulopoulos {\cite{FGK}}]
For every integer $k \ge 2$, $\alpha_k = \frac{\beta_k}{1 - \beta_k}$.
\end{conjecture}

Our main result confirms this conjecture.
\begin{theorem}\label{thm:main}
For every integer $k\geq2$, $\alpha_k = \frac{\beta_k}{1 - \beta_k}$.
\end{theorem}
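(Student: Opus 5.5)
Since \eqref{eq:known-lower} is already known from \cite[Theorem~6]{FGK}, it suffices to prove the upper bound $\alpha_k\le \beta_k/(1-\beta_k)$. Fix $c>\beta_k$ with $c<1$, and set $a>c/(1-c)$. The goal is to show that a positive proportion of integers $n$ have no $k$ distinct divisors in any multiplicative interval of relative length $(\log n)^{-a}$.

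\emph{Step 1 (random model of $n$).} Consider the prime factors of $n$ in scales $\log\log p\in[i,i+1)$. We replace the factorisation of a typical $n\le x$ by a model in which each prime $p\le x$ appears with an independent geometric exponent of parameter $1/p$. A Kubilius-type comparison, via total variation on the primes up to $x^{1/\log\log x}$, shows that events of positive probability in the model have positive density among integers. Put $D=\log\log x$. Then the set of scales $i$ carrying a prime factor behaves like $\A\cap[1,D]$. Divisors $d$ correspond to subsets, and $\log d$ is a sum of $\log p$ with $\log p\approx e^{i}$.

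\emph{Step 2 (reduction to approximate subset sums).} Suppose $d_1<\cdots<d_k$ all lie in $[y,y(1+(\log n)^{-a})]$. Write $d_j=m\,u_j$, where $m$ is the common part made of small primes (scales $<cD$) and the $u_j$ are products of primes at scales in $[cD,D]$. Only at most $\sim (\log x)^{c}$-many divisors from small scales exist, up to $\exp(O(\cdot))$ fluctuations. The closeness $|\log d_j-\log d_1|\le(\log n)^{-a}$ then forces the large-scale parts to satisfy $k$ approximate equalities $\sum_{B_j}\log p\approx t_j$. Here $t_j$ is an arbitrary translation coming from small-prime contributions, and the tolerance is $e^{-aD}$ relative to $e^{D}$. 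The threshold $a>c/(1-c)$ is exactly what makes the number of available translations, namely $2^{O(\log\text{-many small scales})}$, beaten by the decay of approximate coincidence probabilities after rescaling $D\mapsto D/(1-c)$.

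\emph{Step 3 (uniform bound).} This is the key estimate. We need to prove that for $c>\beta_k$ the probability that $\A\cap[D^c,D]$ (equivalently the prime model) has $k$ distinct subsets whose sums agree up to error $\delta$ after \emph{arbitrary} shifts is $O(D^{-\eta})$ or smaller, uniformly in the shifts. We then union bound over translations. I expect this to be the main obstacle. The definition of $\beta_k$ only gives failure with probability bounded away from $1$ for exact sums, with no shifts and no quantitative rate.

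My plan is to use the FGK entropy characterisation of $\beta_k$. We identify $\beta_k$ with their strict entropy threshold, which requires showing that the weak and strict thresholds coincide. For that I would use flag refinement together with concavity of entropy in the flag parameters, so that any $c$ above the weak threshold strictly exceeds the strict one. Then a first-moment count over flags of linear relations gives a bound $D^{-\eta}$ on the expected number of approximate $k$-tuples. The counting is by volume, so it is insensitive to translations. Combining this with Steps 1 and 2 yields $\alpha_k\le a$ for every $a>c/(1-c)$. Letting $c\downarrow\beta_k$ gives $\alpha_k\le\beta_k/(1-\beta_k)$, which proves the theorem.
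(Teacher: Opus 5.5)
Your architecture is the paper's: reduce to $\alpha_k\le\beta_k/(1-\beta_k)$, and prove a translation-uniform power-saving bound for approximate equal sums in $\A\cap(D^c,D]$ from the entropy formulation, after showing $\widetilde\gamma_k=\gamma_k$ by flag refinement and concavity. Then transfer through a random integer with independent geometric prime exponents, with small-prime parts acting as translations. The gap is in the transfer, Steps 1--2, which does not work as written.

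\emph{The scale dictionary.} A window $\log\log p\in[i,i+1)$ contains a prime factor with probability about $1-e^{-1}$, not $1/i$. So occupied $\log\log$-scales do not behave like $\A\cap[1,\log\log x]$. The $1/i$ law comes from binning $\log p$ itself at the target precision. With $T\approx\log n$ and $K=T^a$, the primes with $K\log p\in(i,i+1]$ have reciprocal sum about $1/i$. The right parameter is $D=KT$, so $\log D=(1+a)\log T$; this is what your rescaling $D\mapsto D/(1-c)$ should encode. Then $K\log d_j=K\log s_j+\sum_{i\in B_j}i+\sum_{i\in B_j}\theta_i$ with $\theta_i\in(0,1]$. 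Hence $k$ close divisors give translated sums of diameter at most $1+|\A_D|\le 1+2\log D$, which is why the subset-sum bound must be polynomial in $1+H$. The condition $a>c/(1-c)$, i.e.\ $a/(1+a)>c$, is exactly what makes every bin index above a polylogarithmically small cutoff exceed $D^c$. It is not what beats the translations.

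\emph{The small-prime cutoff.} This is the main missing idea. Your split at scales $<cD$, i.e.\ $\log p<(\log x)^c$, leaves about $2^{c\log\log x}$ divisors of the small-prime part $S$. The union bound over $\tau(S)^k$ translations then costs a fixed power of $\log x$, hence of $D$. The entropy argument only yields some $\eta(k,c)>0$ by compactness, with no control on its size, so nothing makes $\tau(S)^kD^{-\eta}\to0$. The paper cuts far lower, at $\log y_0\asymp(\log T)^3$. This is still large enough that three probabilities are $o(1)$: the summed prime-number-theorem errors over all bins above $y_0$, two selected primes falling in one bin, and $p^2\mid M_y$ with $p>y_0$. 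So those bins couple exactly with $\A_D$. Yet $\tau(S)\le\exp(\sqrt{\log T})=D^{o(1)}$ with high probability, so the union bound is free.

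\emph{Further omissions.} If $u_j=u_\ell$ for some $j\ne\ell$, then $B_j=B_\ell$ and the subsets are not distinct. The paper excludes this with a first-moment bound showing that, with high probability, $S$ has no two divisors with ratio in $(1,1+T^{-a}]$. Your Kubilius comparison covers only primes up to $x^{1/\log\log x}$, while close divisors of $n$ may use its larger primes. The paper needs no comparison: $\Prob(M_y=m)=C_y/m$ for $y$-smooth $m$, and density-zero sets have logarithmic density zero. So the almost-everywhere property at an exponent $b>a$ would force $M_y$ to have $k$ close divisors with probability tending to one.

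\emph{Step 3.} The expected number of approximate $k$-tuples is not small. Adjoining a common new element to every $B_i$ preserves the configuration, so the count is inflated by up to $2^{|\A_D|}$. What the paper bounds is the number of attainable projected sums of the non-pivot elements, at most $D^{\min_{\W}e_{\cc,\mm}(\W)+o(1)}$. This is multiplied by $O((1+H)^r)$ pivot tuples per sum and by the pivot mass ratio $\ll D^{-e_{\cc,\mm}(\V)}$. This only works on a good event $\mathcal G_D$ controlling interval counts, so the union over translations must be taken inside $\mathcal G_D$. The inequality actually used is $c>\beta_k\ge\widetilde\gamma_k=\gamma_k$, so that no weakly feasible system has endpoint $c$.
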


Our proof also resolves a question about the entropy formulation of the random-set problem.
Ford, Green and Koukoulopoulos \cite{FGK} associate systems of rational subspaces and probability measures on $\{0,1\}^k$ to configurations of $k$ subsets. These systems record the membership patterns of elements in the subsets across successive size ranges.
Their upper bound for equal subset sums leads to a weak entropy condition, while their construction of equal subset sums requires its strict version.
Writing $\gamma_k$ and $\widetilde\gamma_k$ for the respective optimal endpoints, they  prove that
\begin{equation}\label{eq:threshold-sandwich}
 \widetilde\gamma_k\leq\beta_k\leq\gamma_k.
\end{equation}

The precise definitions will be recalled in Section~\ref{sec:entropy}.
Theorem~\ref{thm:strict} establishes that $\widetilde{\gamma}_k=\gamma_k$, answering the question in \cite[Remark~3.1(a)]{FGK} and hence identifying both thresholds with $\beta_k$. More precisely, every weakly feasible system can be replaced by a strictly feasible system with arbitrarily small loss in its endpoint. The proof combines flag refinement with entropy concavity.

To establish the arithmetic upper bound, Lemma \ref{lem:translated} gives a power-saving estimate for approximate subset sums above the threshold $\beta_k$, uniformly in arbitrary translations. These translations and approximation errors arise from splitting divisors into small- and large-prime parts and rounding prime logarithms. A model with independent geometric prime exponents then transfers the estimate to divisors, yielding the reverse of \eqref{eq:known-lower}.

All logarithms are natural. For a nonempty finite subset $F$ of $\R$, write $\diam F=\max F-\min F$.
Unless a dependence is specified, constants may depend on $k$ and the fixed exponents, but not on the variables tending to infinity.
For an event $E$, let $\mathbb I_E$ denote its indicator.

\section{Entropy thresholds and translated sums}\label{sec:entropy}

Write $Q_k=\{0,1\}^k$, $\zero=(0,\ldots,0)$ and $\one=(1,\ldots,1)$.
We regard $Q_k$ as a subset of $\mathbb Q^k$, and call its elements
\emph{cube vectors}.
Following \cite{FGK}, we recall the relevant terminology on flags and systems.
An $r$-step \emph{flag} is a nested sequence $\mathcal V=(V_0,\ldots,V_r)$ of rational subspaces with $\mathbb Q\mathbf 1=V_0\leq V_1\leq\cdots\leq V_r\leq\mathbb Q^k$. The inclusions are not required to be strict. We call the flag \emph{complete} if $\dim V_j=\dim V_{j-1}+1$ for every $1\leq j\leq r$, equivalently if $\dim V_j=j+1$ for every $j$.
\begin{definition}
    A \emph{system} is a triple $(\V,\cc,\mm)$ such that:
    \begin{enumerate}[label=(\alph*)]
        \item $\V$ is an $r$-step flag whose members $V_j$ are distinct and spanned by elements of $Q_k$;
        \item $\V$ is \emph{nondegenerate}, meaning that $V_r$ is not contained in any of the subspaces $\{v\in\mathbb Q^k:v_s=v_t\}$ with $s\ne t$;
        \item $\cc=(c_1,\ldots,c_{r+1})$ satisfies $1\geq c_1\geq\cdots\geq c_{r+1}\geq0$;
        \item $\mm=(\mu_1,\ldots,\mu_r)$ is an $r$-tuple of probability measures on $Q_k$;
        \item $\supp\mu_j\subseteq V_j\cap Q_k$ for every $1\leq j\leq r$.
    \end{enumerate}
    We call $c_{r+1}$ the \emph{endpoint} of the system $(\mathcal{V}, \mathbf{c}, \boldsymbol{\mu})$, and say that the system is \emph{complete} if its underlying flag $\mathcal{V}$ is complete.
\end{definition}

A subflag $\W\leq\V$ is a sequence of rational subspaces $W_0=\Q\one\leq W_1\leq\cdots\leq W_r$ such that $W_j\subseteq V_j$ for every $j=0,\ldots,r$.
Its inclusions need not be strict, and its spaces need not be spanned by cube vectors. We call the subflag proper when $W_j\ne V_j$ for at least one index $j$.
For a probability measure $\mu$ on $Q_k$ and a rational subspace $W\subseteq\Q^k$, each coset $C$ of $W$ has mass $\mu(C)=\sum_{\omega\in C\cap Q_k}\mu(\omega)$.
The entropy of these coset masses is
\[
 H_\mu(W)=-\sum_{C\in\Q^k/W}\mu(C)\log\mu(C),
 \qquad 0\log0=0.
\]
Only cosets meeting the finite set $Q_k$ can have positive mass.
Define the energy
\begin{equation}\label{eq:energy}
 e_{\cc,\mm}(\W)
 =\sum_{j=1}^r(c_j-c_{j+1})H_{\mu_j}(W_j)
   +\sum_{j=1}^r c_j\dim(W_j/W_{j-1}).
\end{equation}
Since $H_{\mu_j}(V_j)=0$, the value at $\V$ is just the second sum in \eqref{eq:energy}.
The weak entropy condition is $e_{\cc,\mm}(\W)\geq e_{\cc,\mm}(\V)$ for every subflag.
The strict condition requires strict inequality for every $\W\ne\V$. The suprema of $c_{r+1}$ over the weak and strict systems are denoted by $\gamma_k$ and $\widetilde\gamma_k$, respectively. We call a system weakly or strictly feasible when it satisfies the corresponding entropy condition.

The next theorem gives the approximation needed to obtain equality in \eqref{eq:threshold-sandwich}.

\begin{theorem}\label{thm:strict}
Suppose a system with endpoint $b=c_{r+1}$ satisfies the weak entropy condition. For every $\eps>0$, its flag admits a complete refinement supporting a system that satisfies the strict entropy condition and has endpoint greater than $b-\eps$.
Consequently $\widetilde\gamma_k=\gamma_k$.
\end{theorem}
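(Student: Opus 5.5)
Starting from a weakly feasible system $(\V,\cc,\mm)$ with endpoint $b$, I will proceed in two stages: first refine the flag to a complete one without losing weak feasibility or endpoint, then perturb $\cc$ and $\mm$ slightly to make every inequality strict.

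\emph{Flag refinement.} Each $V_j$ is spanned by cube vectors, so between $V_{j-1}$ and $V_j$ I insert a chain $V_{j-1}=U_0<U_1<\cdots<U_m=V_j$ with each step adding one cube vector from $V_j\cap Q_k$. On every inserted step I set the measure to $\mu_j$ restricted to the new space, and I set the new parameters $c$ equal to $c_j$, so the new differences $c-c'$ vanish. This keeps the energy unchanged and preserves the support condition. I must check that $\mu_j$ conditioned on $U_i$ is supported there. If it is not, I will instead give the inserted steps a measure supported on $U_i\cap Q_k$; its coefficient $c_i-c_{i+1}=0$ removes it from the energy anyway. The weak condition for the refined flag then reduces to the original one: a subflag $\W'$ of the refined flag restricts (taking its members at the original indices) to a subflag of $\V$. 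The dimension sum telescopes with coefficients that are monotone in the index. Replacing each $W'$ at an inserted index by its value at the next original index can only lower the weighted dimension sum, since the $c$'s are nonincreasing, while the entropy terms are unaffected because the only nonzero ones occur at original indices. Hence $e(\W')\ge e(\W)\ge e(\V)=e(\V')$. I will then also split the equal $c$'s slightly so that the $c$'s are strictly decreasing, accepting an $O(\delta)$ loss.

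\emph{Strictification.} I will replace each $\mu_j$ by $\mu_j^\delta=(1-\delta)\mu_j+\delta\nu_j$, where $\nu_j$ is uniform on $V_j\cap Q_k$. I will also shift $\cc$ to $c_j^\delta=c_j-\delta'$ with slightly larger gaps, and lower the endpoint by at most $\eps$. Two facts drive the argument:
\begin{itemize}
\item $H_\mu(W)$ is concave in $\mu$. Since the energy is affine in $\cc$, this gives $e(\W)\ge(1-\delta)e_{\mu}(\W)+\delta e_{\nu}(\W)$ termwise.
\item For the uniform $\nu_j$, $H_{\nu_j}(W_j)>0$ whenever $W_j\subsetneq V_j$. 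This holds because $V_j$ is spanned by cube vectors, so they occupy at least two cosets of $W_j$.
\end{itemize}
Together these give a strict gain $e(\W)-e(\V)\ge\delta\kappa>0$ for every proper $\W$ whose deviation occurs at an index with positive $c_j-c_{j+1}$. Because there are only finitely many coset partitions, $\kappa$ can be taken uniform over all $W_j$ spanned by lattice points of $Q_k$-differences. For general rational $W_j$, the coset partition of $Q_k$ depends only on $W_j\cap(Q_k-Q_k)$-span, which is a finite set of possibilities.

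\emph{Main obstacle.} The difficulty is subflags that differ from $\V$ only at indices where equality already holds in the weak condition, in particular where the entropy coefficient is small or zero (for example $j=r$ with $c_{r+1}$ at the endpoint). Here I will use completeness. Each step has dimension one, so a proper $\W$ has $\dim W_j<\dim V_j$ at some index. I will strictly decrease $\cc$ so that every $c_j-c_{j+1}>0$, including lowering $c_{r+1}$ by $\eps/2$. With $\dim$ terms accounted for, uniformity of $\kappa$ over finitely many partition types then lets me choose $\delta$ small relative to the $O(\delta)$ perturbation losses, making every inequality strict.

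Finally, $\widetilde\gamma_k\le\gamma_k$ is trivial, and letting $\eps\to0$ gives the reverse inequality.
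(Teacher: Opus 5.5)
Your flag refinement is essentially the paper's: equal thresholds inside a block make the energy of any subflag equal to that of its compressed subflag, and the zero-length steps can carry the point mass at $\zero$. The strictification, however, has a genuine gap. Write the defect as $\Phi_{\cc,\mm}(\W)=\sum_j t_j\bigl(H_{\mu_j}(W_j)-d_j\bigr)-b\,d_r$, with $t_j=c_j-c_{j+1}$ and $d_j=\dim V_j-\dim W_j$.

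At fixed thresholds the dimension and endpoint terms do not involve the measures. So concavity yields only $\Phi_{\cc,\mm^\delta}(\W)\ge(1-\delta)\Phi_{\cc,\mm}(\W)+\delta\,\Phi_{\cc,\boldsymbol\nu}(\W)$. The extra term $\delta\Phi_{\cc,\boldsymbol\nu}(\W)$ has no sign; relative to the original it is a change of $\delta\sum_j t_j(H_{\nu_j}(W_j)-H_{\mu_j}(W_j))$. What matters is whether the entropy of $\nu_j$ beats the dimension penalty $d_j$ and the term $-b\,d_r$, not whether $H_{\nu_j}(W_j)>0$.

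Already for $k=2$ this fails. Take $V_1=\Q^2$, $c_1=1$, $b=1-1/\log3$ and $\mu_1$ uniform on $\{\zero,(1,0),(0,1)\}$. The subflag $W_1=\Q\one$ is tight, whereas the uniform measure on $Q_2$ has $H_{\nu_1}(\Q\one)=\tfrac32\log2<\log3$, so $\Phi_{\cc,\boldsymbol\nu}(\W)<0$. Lowering the endpoint gains only about $\eps H_{\mu_r}(W_r)$, and this vanishes for subflags with $W_r=V_r$ that deviate at an intermediate index. So no uniform $\kappa$ comes out of the two facts you list. Your plan to choose $\delta$ ``small relative to the $O(\delta)$ losses'' compares two first-order quantities, neither of which you have controlled.

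Splitting the ties actually breaks the construction. In the refined flag, suppose an inserted (non-final) step $j$ of a block receives a gap $t_j>0$. The subflag that replaces only $V_j$ by $V_{j-1}$ then has defect exactly $t_j\bigl(H_{\mu'_j}(V_{j-1})-1\bigr)$, where $\mu'_j$ is the measure you place on that step, whatever you do to the other thresholds. That measure therefore needs coset entropy greater than $1$ over $V_{j-1}$. A point mass at $\zero$ perturbed by $\delta$ of uniform has entropy $O(\delta\log(1/\delta))$, and $\mu_j$ conditioned on the new space is uncontrolled. Even the full uniform measure can fail. For $V_{j-1}=\mathrm{span}(\one,(1,1,0,0))$ and $V_j=V_{j-1}+\Q(1,0,1,0)$ in $\Q^4$, the six cube vectors of $V_j$ give coset masses $\tfrac23,\tfrac16,\tfrac16$, with entropy about $0.87$. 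So your modified system can violate even the weak condition.

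The missing idea is to mix the whole system, thresholds and endpoint included, with a reference system on the complete flag that is itself strictly feasible, and to mix the interval-weighted measures $t_j\mu_j$ rather than the $\mu_j$ alone. Perspective concavity, $t_j^{(\theta)}H_{\mu_j^{(\theta)}}(W_j)\ge(1-\theta)t_jH_{\mu_j}(W_j)+\theta\tau_jH_{\nu_j}(W_j)$, then gives $\Phi_{\cc^{(\theta)},\mm^{(\theta)}}(\W)\ge(1-\theta)\Phi_{\cc,\mm}(\W)+\theta\Phi_{\cc^*,\boldsymbol\nu}(\W)>0$, at an endpoint cost of $\theta b$.

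The paper's reference system has three features:
\begin{itemize}
\item endpoint $0$;
\item measures $\nu_j$ of mass $\tfrac13$ on each of $\zero,\omega_j,\one-\omega_j$, so that $H_{\nu_j}(V_{j-1})=\log3>1$;
\item geometric gaps $\tau_j\propto q^{j-1}$.
\end{itemize}
At the first index where $\W$ deviates, completeness forces $W_j=V_{j-1}$ and $d_j=1$, and the gain $(\log3-1)\tau_j$ beats the crude loss $r\sum_{i>j}\tau_i$. The mixing also gives every inserted step positive length, together with the good measure $\nu_j$.
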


\begin{proof}
Fix a subflag $\W\leq\V$. Put $t_j=c_j-c_{j+1}$ and
$d_j=\dim V_j-\dim W_j$ for $j=1,\ldots,r$, and set $d_0=0$.
The difference of the dimension terms in the two energies is
$-\sum_{j=1}^r c_j(d_j-d_{j-1})$.
Since $c_{r+1}=b$, summation by parts gives
$\sum_{j=1}^r c_j(d_j-d_{j-1})=\sum_{j=1}^r t_jd_j+bd_r$.
Thus
\begin{equation}\label{eq:defect}
 \Phi_{\cc,\mm}(\W)
 :=e_{\cc,\mm}(\W)-e_{\cc,\mm}(\V)
 =\sum_{j=1}^r t_j\bigl(H_{\mu_j}(W_j)-d_j\bigr)-b\,d_r.
\end{equation}

We first refine the flag. Choose a basis of $V_i/V_{i-1}$ from images of cube vectors, and insert the subspaces obtained by adding these vectors one at a time. In the block refining the step $V_{i-1}<V_i$, give every new step threshold $c_i$.
Assign $\mu_i$ to the final step in the block and the point mass at $\zero$ to all its other steps. The latter steps have interval length zero. All the new subspaces are cube-spanned, and the terminal space is unchanged.

For a subflag of this refinement, retain its spaces at the ends
of the blocks. They form a subflag of the original flag.
The entropy terms inside a block vanish except at its last step,
and the dimension terms telescope because the thresholds throughout
the block are equal. Thus its energy is exactly the energy
of the compressed subflag. The same equality holds for the two
ambient flags. Weak feasibility and the endpoint are therefore
preserved. Relabel the refined system as $(\V,\cc,\mm)$, with $r$
steps, and use \eqref{eq:defect} for its subflags. We now have
$\dim V_j=j+1$ and $1\leq r\leq k-1$.

We construct a strict reference system on this complete flag.
For each $j$, choose $\omega_j\in(V_j\cap Q_k)\setminus V_{j-1}$,
and let $\nu_j$ give mass $1/3$ to each of
$\zero,\omega_j,\one-\omega_j$.
These vectors belong to $V_j\cap Q_k$. Modulo $V_{j-1}$, their
classes are $[\zero]$, $[\omega_j]$ and $-[\omega_j]$. They are distinct
because $[\omega_j]\ne[\zero]$ in the rational vector space $V_j/V_{j-1}$.
Hence $H_{\nu_j}(V_{j-1})=\log3$.
Set $h=\log3-1>0$. Choose $q\in(0,1)$ so small that
$rq/(1-q)<h/2$, and put
\[
 \tau_j=\frac{q^{j-1}}{\sum_{i=1}^r q^{i-1}},
 \qquad c_j^*=\sum_{i=j}^r\tau_i,
 \qquad c_{r+1}^*=0.
\]
Let $\W$ be a proper subflag, and let $j$ be the first index
at which $W_j\ne V_j$. Since the flag is complete and
$W_{j-1}=V_{j-1}$, necessarily $W_j=V_{j-1}$ and $d_j=1$.
The terms before $j$ in \eqref{eq:defect} vanish; the term at
$j$ is $h\tau_j$. For $i>j$ we use
$H_{\nu_i}(W_i)\geq0$ and $d_i\leq\dim V_i-1=i\leq r$,
so $H_{\nu_i}(W_i)-d_i\geq-r$. It follows that
\[
 \Phi_{\cc^*,\boldsymbol\nu}(\W)
 \geq h\tau_j-r\sum_{i>j}\tau_i
 \geq \tau_j\left(h-\frac{rq}{1-q}\right)>0.
\]
The reference system therefore satisfies the strict entropy condition.

For $0<\theta<1$, mix the interval-weighted measures by setting
\[
 t_j^{(\theta)}=(1-\theta)t_j+\theta\tau_j,\qquad
 \mu_j^{(\theta)}
 =\frac{(1-\theta)t_j\mu_j+\theta\tau_j\nu_j}
        {t_j^{(\theta)}},\qquad
 b^{(\theta)}=(1-\theta)b.
\]
Every $t_j^{(\theta)}$ is positive.
Set $c_{r+1}^{(\theta)}=b^{(\theta)}$ and, for $1\leq j\leq r$,
define $c_j^{(\theta)}=b^{(\theta)}+\sum_{i=j}^r t_i^{(\theta)}$.
Then $c_j^{(\theta)}=(1-\theta)c_j+\theta c_j^*$ for every $j$,
so the new
thresholds satisfy the required order and upper bound $1$.
Both $\mu_j$ and $\nu_j$ are supported on $V_j\cap Q_k$, so
$\supp\mu_j^{(\theta)}\subseteq V_j\cap Q_k$.
Entropy is concave on a finite probability simplex
\cite[Section~2.7]{CT}. Apply this to the induced coset distributions
and multiply by $t_j^{(\theta)}$.
For every subspace $W_j\subseteq V_j$, this gives
\[
 t_j^{(\theta)}H_{\mu_j^{(\theta)}}(W_j)
 \geq(1-\theta)t_jH_{\mu_j}(W_j)
       +\theta\tau_jH_{\nu_j}(W_j).
\]
The other terms in \eqref{eq:defect} are linear in the interval lengths and the endpoint. Consequently, for every proper subflag $\W\leq\V$,
\begin{align*}
 \Phi_{\cc^{(\theta)},\mm^{(\theta)}}(\W)
 &\geq(1-\theta)\Phi_{\cc,\mm}(\W)
       +\theta\Phi_{\cc^*,\boldsymbol\nu}(\W)>0.
\end{align*}
The first defect on the right is nonnegative by the weak condition, and the second is positive by the strict reference construction.
Choose $\theta$ small enough that $\theta b<\eps$.
The new endpoint is then $b^{(\theta)}=b-\theta b>b-\eps$, which proves the approximation assertion.
Taking suprema gives $\widetilde\gamma_k\geq\gamma_k$; the reverse inequality holds because every strict system is weak.
\end{proof}

Together with \eqref{eq:threshold-sandwich}, Theorem~\ref{thm:strict} proves $\widetilde\gamma_k=\beta_k=\gamma_k$.
These numbers are positive by \cite{FGK}, and are less than one.
To prove the latter assertion, take any weakly feasible system and write $b=c_{r+1}$ for its endpoint.
For the constant subflag defined by $W_j=\Q\one$ for every $j=0,\ldots,r$, all dimension increments vanish. Since a distribution on $Q_k$ has entropy at most $k\log2$, its energy is at most $k\log2\sum_{j=1}^r(c_j-c_{j+1}) =k\log2(c_1-b)\leq k\log2(1-b)$. On the other hand, $e_{\cc,\mm}(\V)\geq b(\dim V_r-1)\geq b$.
The last inequality uses nondegeneracy, which implies $\dim V_r\geq2$.
The weak condition therefore gives $b\leq k\log2(1-b)$, and hence $b\leq k\log2/(1+k\log2)<1$.

For a finite set $S\subset\N$, a vector $z=(z_1,\ldots,z_k)\in\R^k$ and a number $H\geq0$, write $\mathcal C_k(S,z,H)$ for the condition that there are pairwise distinct subsets $B_1,\ldots,B_k\subset S$ such that
\[
 \diam\left\{z_i+\sum_{a\in B_i}a:1\leq i\leq k\right\}\leq H.
\]
For fixed $c>0$, the sets $\A\cap[D^c,D]$ and $\A\cap(D^c,D]$ differ only if $D^c$ is an integer belonging to $\A$, an event of probability at most $D^{-c}$.

\begin{lemma}\label{lem:translated}
Fix an integer $k \ge 2$ and a real number $c \in (\beta_k, 1)$, and set $\mathbf{A}_D = \mathbf{A} \cap (D^c, D]$. 
There exist positive constants $\eta$, $C_{k,c}$, and $D_0$, depending only on $k$ and $c$, and for each $D \ge D_0$ an event $\mathcal{G}_D$ determined by $\mathbf{A}_D$ satisfying $\mathbb{P}(\mathcal{G}_D) \to 1$ as $D \to \infty$, such that for all $z \in \mathbb{R}^k$ and $H \ge 0$,
\begin{equation}\label{eq:translated}
\mathbb{P}\big(\mathcal{G}_D \cap \mathcal{C}_k(\mathbf{A}_D, z, H)\big) \le C_{k,c} (1 + H)^{k-1} D^{-\eta}.
\end{equation}
\end{lemma}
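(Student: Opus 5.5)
We follow the upper-bound argument of Ford, Green and Koukoulopoulos, which shows that $\beta_k\leq\gamma_k$ by a first-moment count. We keep track of two things it ignores: the translation $z$ and the tolerance $H$. By Theorem~\ref{thm:strict} and the identity $\widetilde\gamma_k=\beta_k=\gamma_k$, we have $c>\gamma_k$. So no weakly feasible system has endpoint $\geq c$. We first make this quantitative by compactness. The space of systems with $c_{r+1}\geq c$ is, for each of the finitely many flags, a compact set of parameters $(\cc,\mm)$. The minimum defect $\min_{\W}\Phi_{\cc,\mm}(\W)$ is lower semicontinuous in these parameters; indeed it is a minimum of finitely many continuous functions once we restrict to subflags spanned by sums of cosets, which is legitimate because the relevant $W_j$ may be taken from a finite lattice of subspaces. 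Hence there is $\delta=\delta(k,c)>0$ such that every system with endpoint $\geq c$ has some subflag with $\Phi_{\cc,\mm}(\W)\leq-\delta$.

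Next we carry out the first-moment count. Given distinct $B_1,\ldots,B_k\subset\A_D$, encode them by the map $\omega:\A_D\to Q_k$, $a\mapsto(\mathbb I_{a\in B_i})_i$. Take $\mathcal G_D$ to be the FGK regularity event: $\A_D$ has about the expected number of elements, $\sim(1-c)\log D$, and these are suitably spread over each dyadic-logarithmic range $(D^{u},D^{u'}]$, say with $|\A\cap(D^u,D^{u'}]|\leq (u'-u)\log\log D^{\,}\!+o(\log D)$. This event has probability tending to one by standard concentration. On $\mathcal G_D$, the patterns $\omega$ are grouped into a bounded number of ``types''. Following FGK, we partition $[c,1]$ into blocks of $\log D$-scale, take empirical measures $\mu_j$ of $\omega$ on each block, and take the flag $V_j$ spanned by the patterns appearing at scales above $D^{c_{j+1}}$. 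A type therefore determines a system with endpoint $\geq c$, up to $o(1)$ discretization.

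For a fixed type, we bound the probability that some admissible $\omega$ satisfies $\diam\{z_i+\sum_a a\,\omega_i(a)\}\leq H$. Choose the subflag $\W$ given by the compactness step. Then bound the expected number of pairs ($\A_D$-configuration, $\omega$) by the product of three factors. The first counts the choices of the elements at each scale together with their $\W$-coset labels, which gives $\exp((\log D)\sum t_jH_{\mu_j}(W_j))$ up to $D^{o(1)}$. The second counts the choices of the projections onto $V_j/W_j$. The third is the probability that the linear constraints hold. These constraints are $k-1$ equations $\sum_a a(\omega_i-\omega_{i'})(a)\in[z_{i'}-z_i-H,\,z_{i'}-z_i+H]$. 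Project them onto the $d_r$ directions of $V_r/W_r$. For each such direction we pick the largest element $a$ that carries that direction in the quotient, with $a\approx D^{c_j}$. Conditioning on all other elements, that element is pinned into an interval of length $O(1+H)$, so it costs a factor $(1+H)/a$ times the density $1/a$-weighted count. Summing over the scales gives exactly the dimension terms $\sum c_j\dim(V_j/W_j)$ and the $-b\,d_r$ term in \eqref{eq:defect}. Since $d_r\leq k-1$, the factor $(1+H)^{k-1}$ appears. The translation $z$ enters only through where the pinned interval sits, so the bound is uniform in $z$. Altogether the count is $\ll (1+H)^{k-1}D^{\Phi+o(1)}\leq (1+H)^{k-1}D^{-\delta/2}$. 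Summing over the $D^{o(1)}$ types, or more carefully $\exp(O(\log\log D)^{2})$ of them, gives \eqref{eq:translated} with $\eta=\delta/3$.

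The main obstacle is the pinning step. We need uniformity in $z$ and $H$ while still extracting one constraint per quotient direction. The elements used for pinning must be chosen so that the remaining constraints stay independent after conditioning, which is a triangular-basis argument along the flag, and this must work for real rather than integer targets. We would first try to reproduce the FGK proof of $\beta_k\leq\gamma_k$ verbatim, replacing ``$=0$'' by ``$\in$ an interval of length $2H$ around $z$''. The point is that each pinning estimate $\Prob(a\in\A\cap I)\leq|I|/\min I$ is insensitive to the location of $I$. The compactness step securing a uniform $\delta$ is routine by comparison.
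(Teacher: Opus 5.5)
\textbf{There is a genuine gap in the central counting step.} Your outer architecture matches the paper: a regularity event $\mathcal G_D$, types recording empirical measures on layers, compactness to get a uniform $\delta$, and pinning into intervals of length $O(1+H)$ so that the position fixed by $z$ is irrelevant. But the counting step, as you describe it, does not produce the defect $\Phi_{\cc,\mm}(\W)$ of \eqref{eq:defect}.

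You count labellings up to $\W$-cosets and pin only $d_r$ elements, one per direction of $V_r/W_r$. Those elements need labels independent modulo $W_r$, and at most $f_j=\dim V_j-\dim(V_j\cap W_r)$ of them can lie above $D^{c_{j+1}}$. So the saving is at most $\sum_j c_j(f_j-f_{j-1})=\sum_j t_jf_j+bd_r$, and your exponent is at least $\Phi_{\cc,\mm}(\W)+\sum_j t_j\bigl(\dim(V_j\cap W_r)-\dim W_j\bigr)$.

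This loss is real. On a complete flag, for some $1\le j_0<r$, take $W_{j_0}=V_{j_0-1}$ and $W_j=V_j$ otherwise. Apply this to a type whose non-pivot labels in the $j_0$-th layer all lie in $V_{j_0-1}$. Then $\Phi_{\cc,\mm}(\W)=-t_{j_0}<0$. But $d_r=0$, so nothing is pinned and your bound is at least $1$.

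Your scheme effectively sees only subflags with $W_j=V_j\cap W_r$: replacing $W_j$ by $V_j\cap W_r$ never raises your exponent, and on that class your exponent equals $\Phi$. Compactness, however, only supplies some $\W$ with $\Phi\le-\delta$ in the full class. You would need the entropy condition restricted to that smaller class to have the same threshold $\gamma_k$, and nothing shows this. Your closing bookkeeping (``$\sum c_j\dim(V_j/W_j)$ and $-bd_r$'') also does not match \eqref{eq:defect}, whose dimension part is $-\sum_j t_jd_j-bd_r=-\sum_j c_j(d_j-d_{j-1})$. The ``second factor'' is never quantified.

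\textbf{The missing idea is to decouple the subflag from the pinning.}
\begin{itemize}
\item \emph{Tie thresholds to pivots.} Put $L=\log D$. Let $x_j=\max\{a\in\A_D:\omega(a)\notin V_{j-1}\}$, and let $c_j$ be $\log x_j/L$ rounded up to the grid $1+L^{-1}\Z$, so that $D^{c_j}/e<x_j\le D^{c_j}$. With fixed blocks, as in your sketch, the element carrying a new direction may sit far below $D^{c_j}$.
\item \emph{Remove all pivots and count sums, not labellings.} Remove all $r$ pivots, not $d_r$ of them, and fix the remaining set $S$. Bound the number of distinct values of $\sum_{a\in S}a\psi(a)$ modulo $\R\one$ over admissible $\psi$. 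Split $\psi(a)=(I-P_j)\psi(a)+P_j\psi(a)$, with $P_j$ a fixed projection of $V_j$ onto $W_j$. The $W_j$-coset labels determine the first part, giving $\exp(\sum_j|S_j|H_{\mu_j}(W_j))$ choices. The $W_r$-valued second part has coordinates along $W_j/W_{j-1}$ of size $O(LD^{c_j})$ with a bounded denominator. Hence there are at most $D^{e_{\cc,\mm}(\W)+o(1)}$ values, for every $\W$ simultaneously.
\item \emph{Pin the pivots.} For each such value, the full constraint confines the pivot tuple to $O((1+H)^r)$ integer points, whatever the value and $z$, because $\one,\omega^1,\dots,\omega^r$ are linearly independent. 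Adjoining the pivots multiplies the Bernoulli mass by $\prod_j(x_j-1)^{-1}\ll D^{-\sum_jc_j}=D^{-e_{\cc,\mm}(\V)}$, since the pivot flag is complete.
\item \emph{Sum.} Summing over $S$ with weights $\Prob(\A_D=S)$ gives $(1+H)^{k-1}D^{e_{\cc,\mm}(\W)-e_{\cc,\mm}(\V)+o(1)}$ for every subflag. Your compactness step then applies.
\end{itemize}

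Two minor points: the compactness step needs upper, not lower, semicontinuity of $\min_\W\Phi$, which is automatic here; and the $\log\log D$ in your $\mathcal G_D$ should be $\log D$.
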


\begin{proof}
Put $L=\log D$. Let $\mathcal G_D$ be the event that the following
two bounds hold simultaneously for every pair of real numbers
$u,v$ satisfying $c\leq u\leq v\leq1$:
\[
 (v-u)L-L^{3/4}
 \leq |\A_D\cap(D^u,D^v]|
 \leq (v-u)L+L^{3/4}.
\]
For each integer $n$ with $D^c<n\leq D$, put
$X_n=\mathbb I_{\{n\in\A\}}-1/n$.
These variables are independent and have mean zero, and
$\sum_{D^c<n\leq D}\operatorname{Var}(X_n)
\leq\sum_{D^c<n\leq D}1/n\ll L$.
Kolmogorov's maximal inequality gives
\[
 \Prob\left(\max_{D^c\leq x\leq D}
 \left|\sum_{D^c<n\leq x}X_n\right|\le\tfrac13L^{3/4}\right)
 =1- O(L^{-1/2}).
\]
On this event, every centred interval sum $\sum_{D^u < n \le D^v} X_n$ has absolute value at most $2L^{3/4}/3$, being the difference of two partial sums. Together with the uniform estimate $\sum_{D^u < n \le D^v} 1/n = (v - u)L + O(D^{-c})$, this implies that the inequalities defining $\mathcal{G}_D$ hold for all sufficiently large $D$. Thus $\mathbb{P}(\mathcal{G}_D) = 1 - O(L^{-1/2}) \to 1$ as $D \to \infty$.

Taking $u=c$ and $v=1$ in the definition of $\mathcal G_D$ gives
$|\A_D|\leq(1-c)L+L^{3/4}\leq2L$ for sufficiently large $D$.
Fix a realization for which both $\mathcal G_D$ and
$\mathcal C_k(\A_D,z,H)$ hold, and choose subsets
$B_1,\ldots,B_k$ witnessing the latter condition.
For each $a\in\A_D$, its membership vector $\omega(a)\in Q_k$
has $i$-th coordinate $1$ when $a\in B_i$ and $0$ otherwise.
Start with $V_0=\Q\one$.
Once $V_{j-1}$ has been constructed, stop if every membership
vector belongs to $V_{j-1}$. Otherwise define
\[
 x_j=\max\{a\in\A_D:\omega(a)\notin V_{j-1}\},\qquad
 \omega^j=\omega(x_j),\qquad
 V_j=V_{j-1}+\Q\omega^j.
\]
We call $x_j$ the $j$-th pivot.

Each chosen membership vector increases the dimension by one.
The procedure therefore stops after $r\leq k-1$ choices, with
$\dim V_j=j+1$. It makes at least one choice: if every membership
vector were in $V_0\cap Q_k=\{\zero,\one\}$, all the $B_i$ would
coincide. After choosing $x_j$, all membership vectors at elements
$a\geq x_j$ belong to $V_j$, so the next pivot satisfies
$x_{j+1}<x_j$.
At termination every membership vector belongs to $V_r$.
For each $s\ne t$, the distinctness of $B_s$ and $B_t$ supplies a
membership vector with unequal $s$-th and $t$-th coordinates.
Consequently $V_r$ is contained in none of the coordinate hyperplanes
$\{v\in\Q^k:v_s=v_t\}$ with $s\ne t$, and $\V=(V_0,\ldots,V_r)$ is a complete
nondegenerate flag.

For each $j=1,\ldots,r$, let $c_j$ be the smallest point of the
grid $1+L^{-1}\Z$ that is at least $\log x_j/L$, and set $c_{r+1}=c$.
Since the grid contains $1$, the rounded thresholds satisfy
$1\geq c_1\geq\cdots\geq c_r>c$.
Moreover, $0\leq c_j-\log x_j/L<1/L$, which gives
$D^{c_j}/e<x_j\leq D^{c_j}$.
Remove the pivots and put $S=\A_D\setminus\{x_1,\ldots,x_r\}$.
Since $S\subseteq\A_D$, we have $|S|\leq2L$ and, for
$c\leq u\leq v\leq1$,
\begin{equation}\label{eq:remaining-counts}
 |S\cap(D^u,D^v]|\leq(v-u)L+L^{3/4}.
\end{equation}
For each $j=1,\ldots,r$, define the layer
$S_j=S\cap(D^{c_{j+1}},D^{c_j}]$.
If $S_j$ is nonempty, then for each $\omega\in Q_k$ define
\[
 \mu_j(\omega)
 =\frac{|\{a\in S_j:\omega(a)=\omega\}|}{|S_j|}.
\]
If $S_j$ is empty, let $\mu_j$ be the point mass at $\zero$.
For $j<r$, every $a\in S_j$ satisfies
$a>D^{c_{j+1}}\geq x_{j+1}$, so the maximality of $x_{j+1}$
gives $\omega(a)\in V_j$.
Every membership vector belongs to $V_r$ when the procedure stops,
which gives the same conclusion for $j=r$.
Thus $\supp\mu_j\subseteq V_j\cap Q_k$ for every $j$.
Similarly, if $a\in S$ and $a>D^{c_1}$, then $a>x_1$, and the
maximality of $x_1$ gives $\omega(a)\in V_0$.
The resulting $(\V,\cc,\mm)$ is therefore a complete system
with endpoint $c$.
For every $j=1,\ldots,r$, the upper bound in
\eqref{eq:remaining-counts}, with $u=c_{j+1}$ and $v=c_j$, gives
$|S_j|\leq(c_j-c_{j+1})L+L^{3/4}$.

The data $(\V,\cc,\mm,\omega^1,\ldots,\omega^r)$ have only
$L^{O_k(1)}$ possible values.
To see this, the flag and the pivot membership vectors are chosen
from the fixed finite cube, and each $c_j$ has $O(L)$ possible
values. For each layer, $\mu_j$ is determined by the $2^k$
nonnegative integer counts of its membership vectors.
Every count is at most $2L$, so there are at most
$(\lfloor2L\rfloor+1)^{2^k}$ choices of these counts per layer.
There are at most $k-1$ layers.

Fix one such choice of data, and then fix a remaining set $S$
that occurs with these data. We will sum over these sets after
counting the compatible pivot tuples.
Let $\pi:\R^k\to\R^{k-1}$ be the map
$\pi(y_1,\ldots,y_k)=(y_2-y_1,\ldots,y_k-y_1)$, whose kernel
is $\R\one$.
Let $\mathscr L(S)$ consist of the vectors
$\pi(\sum_{a\in S}a\psi(a))$, where $\psi:S\to Q_k$ ranges over
assignments with empirical measure $\mu_j$ on each layer $S_j$ and
with $\psi(a)\in\{\zero,\one\}$ for $a>D^{c_1}$.
The elements above $D^{c_1}$ contribute zero after applying $\pi$;
on each layer $S_j$, the support condition gives $\psi(a)\in V_j$.
We shall show that there is a constant $C_k>0$, depending only on
$k$, such that
\begin{equation}\label{eq:cardinality}
 |\mathscr L(S)|
 \leq \exp(C_kL^{3/4})
 D^{\min_{\W\leq\V}e_{\cc,\mm}(\W)}.
\end{equation}

For a fixed ambient flag $\V$, the energy of a subflag $\W$
depends only on the dimensions $\dim W_j$ and the partitions of
$Q_k$ defined by
$\omega\sim_j\omega'$ if $\omega-\omega'\in W_j$.
Indeed, the partitions determine every coset mass used in the
entropy terms, and the dimensions determine the second sum in
\eqref{eq:energy}.
There are finitely many such choices of dimensions and partitions.
Choose one rational subflag representing each choice that occurs.
Every subflag has the same energy as one of these representatives,
for every admissible $(\cc,\mm)$.
Hence the minimum in \eqref{eq:cardinality} is attained among
this finite collection.

Fix one representative subflag $\W$.
Choose a rational basis of $W_r$ whose first vector is $\one$ and
whose first $\dim W_j$ vectors form a basis of $W_j$ for each
$j=0,\ldots,r$.
For each $j$, extend those first $\dim W_j$ vectors to a basis
of $V_j$. Define $P_j:V_j\to W_j$ by keeping their coordinates
and setting the added coordinates to zero.
Thus $P_j$ is the identity on $W_j$.
There are finitely many cube-spanned ambient flags and finitely
many representative subflags for each. Fix these bases and maps
for all of them before varying the thresholds, measures or set $S$.
Express every $P_j\omega$, $\omega\in V_j\cap Q_k$, in the
chosen basis of $W_r$. These finitely many rational coordinates
are bounded in absolute value and have a common positive integer
denominator, both depending only on $k$.

For $\omega\in V_j\cap Q_k$, write
$\omega=(I-P_j)\omega+P_j\omega$, where $I$ is the identity on $V_j$.
If $\omega-\omega'\in W_j$, then
$(I-P_j)(\omega-\omega')=0$. Hence $(I-P_j)\omega$ depends
only on the coset $\omega+W_j$.
For each coset $C$ of $W_j$ meeting $Q_k$, the fixed empirical
measure prescribes exactly $n_C=|S_j|\mu_j(C)$ elements in that
coset. These nonnegative integers sum to $|S_j|$.
The number of ways to assign these cosets to the elements of
$S_j$ is
\[
 \frac{|S_j|!}{\prod_C n_C!}
 \leq\prod_{C:n_C>0}\mu_j(C)^{-n_C}
 =\exp\bigl(|S_j|H_{\mu_j}(W_j)\bigr).
\]
The inequality follows from the term with these multiplicities in
the multinomial expansion of
$(\sum_C\mu_j(C))^{|S_j|}=1$.
Once all these cosets are assigned, the vector
$\sum_j\sum_{a\in S_j}a(I-P_j)\psi(a)$ is fixed.
Multiplying the bounds for the layers therefore bounds the number
of such vectors.
Using $|S_j|\leq(c_j-c_{j+1})L+L^{3/4}$ and
$H_{\mu_j}(W_j)\leq k\log2$ for each $j$, this product is at most
\[
 \begin{aligned}
 \exp\left(\sum_{j=1}^r|S_j|H_{\mu_j}(W_j)\right)
 &\leq \exp\bigl(rk(\log2)L^{3/4}\bigr)\,
 D^{\sum_{j=1}^r(c_j-c_{j+1})H_{\mu_j}(W_j)}.
 \end{aligned}
\]

Consider next the coordinates of
$\sum_{i=1}^r\sum_{a\in S_i}aP_i\psi(a)$ in the chosen basis
of $W_r$.
Take a basis vector added when the basis of $W_{j-1}$ was extended
to a basis of $W_j$.
Its coordinate receives no contribution from $S_i$ when $i<j$,
because $P_i\psi(a)\in W_i\subseteq W_{j-1}$.
When $i\geq j$, an element $a\in S_i$ satisfies
$a\leq D^{c_i}\leq D^{c_j}$.
Since the coordinates of $P_i\psi(a)$ are $O_k(1)$ and
$|S|\leq2L$, the coordinate of this sum is $O_k(LD^{c_j})$.
It is an integer multiple of the reciprocal of the fixed common
denominator, because every $a$ is an integer. It therefore has
$O_k(LD^{c_j})$ possible values.

There are $\dim W_j-\dim W_{j-1}$ such coordinates at step $j$.
The coordinate along $\one$ is removed by $\pi$, and all remaining
coordinates together determine the image under $\pi$.
Multiplying their numbers of possible values bounds the number
of possible images under $\pi$ of these $W_r$-valued contributions by
$O_k(L^{\dim W_r-1})D^{\sum_{j=1}^r c_j\dim(W_j/W_{j-1})}$.
Multiply this by the bound for the complementary contributions.
The exponent of $D$ is exactly $e_{\cc,\mm}(\W)$.
Since $\dim W_r-1\leq k-1$ and $r\leq k-1$, all remaining
factors are at most $\exp(C_kL^{3/4})$ for a sufficiently large
constant $C_k$ depending only on $k$.
Choose a representative subflag of minimum energy to obtain
\eqref{eq:cardinality}.

For each $v\in\mathscr L(S)$, a compatible pivot tuple must satisfy
\[
 \left\|v+\sum_{j=1}^r x_j\pi(\omega^j)+\pi(z)\right\|_\infty
 \leq H.
\]
Since $\one,\omega^1,\ldots,\omega^r$ are rational vectors independent over $\Q$ and $\ker\pi=\R\one$, the columns $\pi(\omega^1),\ldots,\pi(\omega^r)$ have rank $r$ over $\R$.
Choose $r$ rows giving a nonsingular square matrix $M$.
Its entries lie in $\{-1,0,1\}$, its determinant is a nonzero integer, and its cofactors are bounded in terms of $k$.
Thus $M^{-1}$ is bounded in terms of $k$.
Let $\mathbf b\in\R^r$ consist of the corresponding coordinates of $v+\pi(z)$, and write $\mathbf x=(x_1,\ldots,x_r)$.
The selected inequalities say $M\mathbf x+\mathbf b=\mathbf t$ for a vector $\mathbf t\in\R^r$ satisfying $\|\mathbf t\|_\infty\leq H$.
Thus $\mathbf x=-M^{-1}\mathbf b+M^{-1}\mathbf t$.
Each coordinate of $\mathbf x$ lies in an interval of length $O_k(H)$,
so there are $O_k((1+H)^r)$ possible integer pivot tuples.
The interval lengths, and hence this count, are independent of
the centre $-M^{-1}\mathbf b$ and of $v,z$.

Under the Bernoulli law, adjoining distinct pivots
$X=\{x_1,\ldots,x_r\}$ to a remaining set $S$ disjoint from $X$
changes its mass by the exact factor
\begin{equation}\label{eq:mass-ratio}
 \frac{\Prob(\A_D=S\cup X)}{\Prob(\A_D=S)}
 =\prod_{j=1}^r\frac{1/x_j}{1-1/x_j}
 =\prod_{j=1}^r\frac1{x_j-1}
 \ll_k D^{-\sum_j c_j}.
\end{equation}
The last inequality follows from $x_j>D^{c_j}/e$ and $x_j\geq D^c\to\infty$.
For fixed data and $S$, the number of compatible pivot tuples is at most a constant depending only on $k$ times $|\mathscr L(S)|(1+H)^r$.
For each tuple, \eqref{eq:mass-ratio} bounds the probability of the corresponding set $S\cup X$.
Combining it with \eqref{eq:cardinality}, and increasing $C_k$ to absorb the fixed multiplicative constants, bounds the sum of these probabilities by
\[
 \Prob(\A_D=S)\,(1+H)^{k-1}\exp(C_kL^{3/4})
 D^{\min_{\W\leq\V}e_{\cc,\mm}(\W)-\sum_{j=1}^r c_j}.
\]
Here $r\leq k-1$, and completeness gives $e_{\cc,\mm}(\V)=\sum_{j=1}^r c_j$.
For each fixed choice of data, sum over all remaining sets $S$ that occur with these data. Their unconditional Bernoulli probabilities $\Prob(\A_D=S)$ sum to at most $1$.
Then sum over the $L^{O_k(1)}$ possible choices of data.
Every realization of the event has at least one of the witnesses used in this construction, so these sums give an upper bound by the union bound.
The polynomial factor in $L$ can be absorbed by increasing $C_k$, because $\log L=o(L^{3/4})$.
We obtain
\begin{equation}\label{eq:probability-supremum}
 \begin{split}
 &\Prob\bigl(\mathcal G_D\cap\{\mathcal C_k(\A_D,z,H)\}\bigr)\\
 &\quad\leq(1+H)^{k-1}\exp(C_kL^{3/4})
 \sup_{\substack{(\V,\cc,\mm)\ {\rm complete}\\c_{r+1}=c}}
 D^{\min_{\W\leq\V}
       (e_{\cc,\mm}(\W)-e_{\cc,\mm}(\V))}.
 \end{split}
\end{equation}
The supremum ranges over the complete nondegenerate systems with endpoint $c$, and the same constant $C_k$ applies to all these systems and to every $z,H$.

For each complete nondegenerate flag $\V$, the admissible thresholds with endpoint $c$ form the closed bounded set $1\geq c_1\geq\cdots\geq c_r\geq c$.
Each $\mu_j$ ranges over the probability simplex on the finite set $V_j\cap Q_k$.
Their product is a compact parameter space.
The minimum $\min_{\W\leq\V}(e_{\cc,\mm}(\W)-e_{\cc,\mm}(\V))$ is a continuous function of $(\cc,\mm)$ on this space.
The finite representative subflags chosen above suffice for every $(\cc,\mm)$, so this is a finite minimum of continuous entropy expressions.
This minimum is strictly negative at every point: otherwise that point would be a weakly feasible system with endpoint $c$, implying $c\leq\gamma_k=\beta_k$.
Its maximum on the compact parameter space is therefore strictly negative.
There are only finitely many possible ambient flags, so their maxima have a common upper bound $-\eta_0$, where $\eta_0=\eta_0(k,c)>0$.
Substituting this bound in \eqref{eq:probability-supremum} gives $(1+H)^{k-1}\exp(C_kL^{3/4})D^{-\eta_0}$.
Since $L^{3/4}=o(L)$, this is at most $(1+H)^{k-1}D^{-\eta_0/2}$ for sufficiently large $D$.
Taking $\eta=\eta_0/2$ proves \eqref{eq:translated}.
\end{proof}

\section{From prime factors to divisors}

\begin{proof}[Proof of Theorem~\ref{thm:main}]
Fix $k\geq2$. It suffices to prove the reverse of
\eqref{eq:known-lower}.
We first transfer the density-one property to an independent model
and then show that close divisors in this model are rare above the
claimed threshold. We use Mertens' estimates and the prime number
theorem with its
classical zero-free-region error term, as in \cite{KoukoBook}.

Let $y\to\infty$ and put $T=\log y$.
For every prime $p\leq y$, let $N_p$ be a nonnegative integer-valued
random variable. Take these variables independently, with
$\Prob(N_p=j)=(1-1/p)p^{-j}$ for each integer $j\geq0$.
Set $M_y=\prod_{p\leq y}p^{N_p}$ and
$C_y=\prod_{p\leq y}(1-1/p)$.
An integer is called $y$-smooth if all its prime factors are at
most $y$; the integer $1$ is included.
For every $y$-smooth integer $m=\prod_{p\leq y}p^{v_p}$,
independence gives the exact formula
\begin{equation}\label{eq:prime-model}
 \Prob(M_y=m)
 =\prod_{p\leq y}(1-1/p)p^{-v_p}
 =\frac{C_y}{m}.
\end{equation}
For all other positive integers $m$, $\Prob(M_y=m)=0$.

For $j\geq1$, the geometric law gives $\Prob(N_p\geq j)=p^{-j}$
and $\E N_p=1/(p-1)$. Mertens' estimates imply
$C_y\asymp T^{-1}$ and
\[
 \E\log M_y=\sum_{p\leq y}\frac{\log p}{p-1}\ll T.
\]

For $a>0$, let $\mathcal E_y(a)$ be the event that $M_y$ has
$k$ distinct divisors in a multiplicative interval of relative
length $T^{-a}$.
We first show that $\Prob(\mathcal E_y(a))\to1$ whenever
$0<a<b$ and the density-one assertion defining $\alpha_k$
holds at exponent $b$.
Let $E_b$ be the exceptional set, including $1$, and let
$N_b(X)=|E_b\cap[1,X]|$ count its elements up to $X$.
The density assumption says $N_b(X)=o(X)$.
Partial summation gives
\[
 \sum_{\substack{m\leq X\\m\in E_b}}\frac1m
 =\frac{N_b(X)}X+\int_1^X\frac{N_b(t)}{t^2}\,dt
 =o(\log X).
\]
Fix real numbers $\rho,R$ satisfying $0<\rho<1<R$.
The mass formula \eqref{eq:prime-model} and the harmonic-sum
estimate give
$\Prob(M_y<y^\rho)\leq C_y\sum_{m<y^\rho}1/m
\ll\rho+T^{-1}$.
The same mass formula also gives
\[
 \Prob(M_y\in E_b,\ M_y\leq y^R)
 \leq C_y\sum_{\substack{m\leq y^R\\m\in E_b}}\frac1m=o(1).
\]
For the upper tail, Markov's inequality and $\E\log M_y\ll T$
give $\Prob(M_y>y^R)\leq\E\log M_y/(RT)\ll R^{-1}$.

For fixed $\rho,R$ and sufficiently large $y$, every integer
$m\geq y^\rho$ outside $E_b$ has the required divisors with
relative spread at most
$(\log m)^{-b}\leq\rho^{-b}T^{-b}\leq T^{-a}$.
Thus $\mathcal E_y(a)$ can fail only if $M_y<y^\rho$, or
$M_y>y^R$, or $M_y\in E_b$ with $M_y\leq y^R$.
The bounds above imply
$\limsup_{y\to\infty}\Prob(\mathcal E_y(a)^c)\ll\rho+R^{-1}$.
The implicit constant is independent of $\rho,R$.
Letting $\rho\downarrow0$ and $R\to\infty$ proves that
$\Prob(\mathcal E_y(a))\to1$.

We now prove $\Prob(\mathcal E_y(a))\to0$ under the condition
\begin{equation}\label{eq:exponent-choice}
 a>0,\qquad \frac{a}{1+a}>\beta_k.
\end{equation}
Fix such an $a$, choose $\beta_k<c<a/(1+a)$, and put $K=T^a$
and $D=KT$.
Set $m_0=\lceil K(\log T)^3\rceil$ and $y_0=\exp(m_0/K)$.
The cutoff $y_0$ is an endpoint of the logarithmic intervals below;
the factor $(\log T)^3$ will make the accumulated error from the
prime number theorem tend to zero.
Since $\log y_0=(\log T)^3+O(K^{-1})=o(T)$, we have $y_0<y$
for sufficiently large $y$.
The small-prime part of $M_y$ is the integer
$S=\prod_{p\leq y_0}p^{N_p}$.
The expectation formula for $N_p$ gives
$\E\log S\ll\log y_0\asymp(\log T)^3$.
Let $\tau(S)$ denote the number of positive divisors of $S$.
Using $N_p+1\leq2^{N_p}$ in
$\tau(S)=\prod_{p\leq y_0}(N_p+1)$ gives
\[
 \E\log\tau(S)
 \leq(\log2)\sum_{p\leq y_0}\frac1{p-1}
 \ll\log\log y_0\ll\log\log T.
\]
Markov's inequality therefore implies
$\Prob(\tau(S)>\exp(\sqrt{\log T}))
\ll(\log\log T)/\sqrt{\log T}=o(1)$.

We next estimate the probability that $S$ has two close divisors.
Put $\eps=T^{-a}$ and $X_0=\exp(T^{a/2})$.
First, Markov's inequality gives
$\Prob(S>X_0)\leq\E\log S/\log X_0$.
Suppose now that $S\leq X_0$ and two distinct divisors of $S$
have ratio in $(1,1+\eps]$.
Dividing them by their greatest common divisor gives positive
coprime integers $u<v\leq(1+\eps)u$.
Both divide $S$, so their product divides $S$ and
$u^2<uv\leq S\leq X_0$.
For any positive integer $q$ whose prime factors are all at most
$y_0$, independence and the identities $\Prob(N_p\geq j)=p^{-j}$
give $\Prob(q\mid S)=1/q$.
If $q$ has a larger prime factor, this probability is zero.
In particular, for each of the coprime pairs above,
$\Prob(uv\mid S)\leq1/(uv)$.

Apply the union bound to these coprime pairs and then drop
coprimality in the sum. With $u,v$ ranging over positive integers,
we obtain
\[
 \begin{split}
 &\Prob\bigl(S\text{ has two divisors with ratio in }(1,1+\eps]\bigr)\\
 &\quad\leq\frac{\E\log S}{\log X_0}
       +\sum_{u\leq\sqrt{X_0}}
        \ \sum_{u<v\leq(1+\eps)u}\frac1{uv}\\
 &\quad\leq\frac{\E\log S}{\log X_0}
       +\eps\sum_{u\leq\sqrt{X_0}}\frac1u\\
 &\quad\ll(\log T)^3T^{-a/2}+T^{-a}(1+T^{a/2}).
 \end{split}
\]
For each integer $u$, the interval $(u,(1+\eps)u]$ contains
$\lfloor\eps u\rfloor\leq\eps u$ integers, and $1/(uv)\leq u^{-2}$
for each of them. This proves the second inequality.
Both terms in the final bound tend to zero.
Let $\mathcal S_y$ be the event that $S$ has no two divisors with
ratio in $(1,1+T^{-a}]$ and that
$\tau(S)\leq\exp(\sqrt{\log T})$.
The two probability estimates give $\Prob(\mathcal S_y^c)=o(1)$.

Put $m_1=\lfloor KT\rfloor$.
For each integer $i$ satisfying $m_0\leq i<m_1$, let
$\mathcal P_i$ be the set of primes in the interval
$(e^{i/K},e^{(i+1)/K}]$, and write
$R_i=\sum_{p\in\mathcal P_i}1/p$.
In what follows, a prime $p\leq y$ is called selected when $N_p\geq1$.
Partial summation of the classical prime number theorem gives
$\sum_{p\leq x}1/p=\log\log x+M+
O(\exp(-c_*\sqrt{\log x}))$ for absolute constants $M$ and
$c_*>0$; see \cite{KoukoBook}.
Apply this estimate at the two endpoints of each interval.
For every integer $i$ with $m_0\leq i<m_1$, define
$\delta_i=R_i-\log(1+1/i)$.
Then, for an absolute constant $c_0>0$,
\begin{equation}\label{eq:bin-primes}
 R_i=\log(1+1/i)+\delta_i,\qquad
 |\delta_i|\ll\exp(-c_0\sqrt{i/K}).
\end{equation}
The main term is the difference
$\log((i+1)/K)-\log(i/K)$.
Each index satisfies $i/K\geq(\log T)^3$, and there are at most
$D=T^{a+1}$ indices. Hence
\[
 \sum_{i=m_0}^{m_1-1}|\delta_i|
 \ll\exp\bigl((a+1)\log T-c_0(\log T)^{3/2}\bigr)=o(1).
\]
All sums over $i$ without stated limits in the rest of the proof
range over the integers $m_0\leq i<m_1$.

For each of these indices, set $I_i=1$ if $\mathcal P_i$ contains
a selected prime, and $I_i=0$ otherwise. Write $q_i=\Prob(I_i=1)$.
The sets $\mathcal P_i$ are pairwise disjoint, so the indicators
$I_i$ are independent.
The union bound and the first two terms of inclusion--exclusion,
using independence for distinct primes, give
\[
 0\leq R_i-q_i
 \leq\sum_{\substack{p<q\\p,q\in\mathcal P_i}}\frac1{pq}
 \leq\tfrac12R_i^2.
\]
By \eqref{eq:bin-primes} and $\log(1+1/i)\leq1/i$,
\[
 \sum_iR_i^2
 \leq2\sum_{i=m_0}^{\infty}i^{-2}
       +2\Bigl(\sum_i|\delta_i|\Bigr)^2
 \ll m_0^{-1}+\Bigl(\sum_i|\delta_i|\Bigr)^2=o(1).
\]
Since $0\leq1/i-\log(1+1/i)\leq1/(2i^2)$, we obtain
\[
 \sum_{i=m_0}^{m_1-1}\left|q_i-\frac1i\right|
 \leq\tfrac12\sum_iR_i^2+\sum_i|\delta_i|
       +\tfrac12\sum_{i=m_0}^{\infty}i^{-2}=o(1).
\]
The same pair count bounds the probability that some interval
contains two distinct selected primes:
\[
 \Prob\bigl(\text{some }\mathcal P_i\text{ contains two selected primes}\bigr)
 \leq\sum_i\sum_{\substack{p<q\\p,q\in\mathcal P_i}}\frac1{pq}
 \leq\tfrac12\sum_iR_i^2=o(1).
\]
The probability that $N_p\geq2$ for at least one prime
$y_0<p\leq y$ is at most
$\sum_{y_0<p\leq y}\Prob(N_p\geq2)
=\sum_{y_0<p\leq y}p^{-2}=o(1)$.

The intervals defining $\mathcal P_i$ cover $(y_0,e^{m_1/K}]$.
The remaining interval up to $y$ is $(e^{m_1/K},y]$.
The probability that it contains a selected prime is at most
the sum of $1/p$ over its primes.
Applying the same reciprocal-prime estimate at its endpoints gives
\[
 \sum_{e^{m_1/K}<p\leq y}\frac1p
 =\log(KT/m_1)+O\bigl(\exp(-c_1\sqrt T)\bigr)
 =O\bigl(D^{-1}+\exp(-c_1\sqrt T)\bigr)
\]
for an absolute constant $c_1>0$.
Here $m_1=\lfloor D\rfloor$, so $\log(KT/m_1)=O(D^{-1})$.
This probability also tends to zero.

For sufficiently large $y$, every integer $i$ with $m_0\leq i<m_1$
lies in $(D^c,D]$.
The upper bound follows from $i<m_1\leq D$.
For the lower bound, the definitions give
\[
 \frac{\log m_0}{\log D}
 =\frac{a\log T+3\log\log T+o(1)}{(a+1)\log T}
 \longrightarrow\frac{a}{1+a}>c.
\]
For each such $i$, construct a Bernoulli variable $J_i$ of parameter
$1/i$ from $I_i$ as follows. Use auxiliary random variables that are
independent of all the $N_p$ and of one another.
If $q_i\geq1/i$, retain an outcome $I_i=1$ with conditional
probability $1/(iq_i)$, and put $J_i=0$ otherwise.
If $q_i<1/i$, retain every outcome $I_i=1$ and change an outcome
$I_i=0$ to $J_i=1$ with conditional probability
$(1/i-q_i)/(1-q_i)$.
In both cases $\Prob(J_i=1)=1/i$.
The probability of changing the original value is $q_i-1/i$ in
the first case and $1/i-q_i$ in the second, so
$\Prob(I_i\ne J_i)=|q_i-1/i|$.
The pairs $(I_i,J_i)$ are independent over the indices, because
each uses only the prime variables in its own set $\mathcal P_i$
and its own auxiliary randomness.

For every integer $i$ in $(D^c,D]$ outside the range
$m_0\leq i<m_1$, introduce an independent Bernoulli variable $J_i$
of parameter $1/i$, independent also of all previously defined
variables.
Then $\A_D=\{i\in\N:D^c<i\leq D,\ J_i=1\}$ has the law used
in Lemma~\ref{lem:translated}.
This set is independent of $S$, since its construction uses only
the $N_p$ with $y_0<p\leq y$ and auxiliary randomness, while
$S$ depends only on the $N_p$ with $p\leq y_0$.

Let $\mathcal B_y$ be the event that all the following conditions
hold. For every index $m_0\leq i<m_1$, we have $I_i=J_i$ and
$\mathcal P_i$ contains at most one selected prime. For every prime
$y_0<p\leq y$, we have $N_p\leq1$. Finally, no prime in
$(e^{m_1/K},y]$ is selected.
The preceding estimates and the union bound give
\[
 \begin{split}
 \Prob(\mathcal B_y^c)
 &\leq\sum_i|q_i-1/i|+\tfrac12\sum_i R_i^2\\
 &\quad+\sum_{y_0<p\leq y}p^{-2}
       +\sum_{e^{m_1/K}<p\leq y}p^{-1}
 =o(1).
 \end{split}
\]
For this coupled set $\A_D$, let $\mathcal G_D$ be the event from
the proof of Lemma~\ref{lem:translated}. Then
$\Prob(\mathcal G_D^c)=o(1)$, and $|\A_D|\leq2\log D$
on $\mathcal G_D$ for sufficiently large $D$.

Suppose $\mathcal E_y(a)\cap\mathcal S_y\cap\mathcal B_y$ holds,
and choose divisors $d_1<\cdots<d_k$ with $d_k/d_1\leq1+T^{-a}$.
Write $d_j=s_ju_j$, where $s_j\mid S$ and all prime factors of
$u_j$ exceed $y_0$.
If $u_j=u_\ell$ for some $j<\ell$, then
$1<s_\ell/s_j=d_\ell/d_j\leq1+T^{-a}$, contradicting
$\mathcal S_y$. Thus the integers $u_1,\ldots,u_k$ are pairwise
distinct.
On $\mathcal B_y$, each $u_j$ is a product of distinct selected
primes, with at most one from each set $\mathcal P_i$.
Let $B_j$ be the set of indices $i$ for which $u_j$ contains
the selected prime from $\mathcal P_i$.
There is at most one such prime per index, so the distinct $u_j$
give distinct sets $B_j$.
For each $i\in B_j$, selection implies $I_i=1$, and
$\mathcal B_y$ gives $J_i=I_i=1$. Hence $B_j\subseteq\A_D$.

For an index $i$ with a selected prime $p_i\in\mathcal P_i$,
write $K\log p_i=i+\theta_i$.
The endpoints of its defining interval give $0<\theta_i\leq1$.
Consequently, for every $j=1,\ldots,k$,
\[
 K\log d_j=K\log s_j+\sum_{i\in B_j}i
                         +\sum_{i\in B_j}\theta_i.
\]
The numbers on the left have diameter at most
$K\log(1+T^{-a})\leq KT^{-a}=1$.
Each final sum on the right lies between $0$ and $|B_j|$, hence
between $0$ and $|\A_D|$.
Thus the difference between any two of these error sums has
absolute value at most $|\A_D|$.
Subtracting them gives
\[
 \diam\left\{K\log s_j+\sum_{i\in B_j}i:1\leq j\leq k\right\}
 \leq1+|\A_D|.
\]
On $\mathcal G_D$ this diameter is at most the deterministic
number $H_0=1+2\log D$.

Fix a value $s$ of $S$ satisfying the two conditions defining
$\mathcal S_y$, and condition on $S=s$.
There are $\tau(s)^k$ ordered tuples $(s_1,\ldots,s_k)$ of
positive divisors of $s$, with repetitions allowed.
Each tuple determines the fixed vector
$z=(K\log s_1,\ldots,K\log s_k)$.
By the preceding construction, on $\mathcal B_y\cap\mathcal G_D$
the event $\mathcal E_y(a)$ implies
$\mathcal C_k(\A_D,z,H_0)$ for at least one of these tuples.
Since $S$ and $\A_D$ are independent, the conditional law of
$\A_D$ is unchanged, and Lemma~\ref{lem:translated} applies
to each fixed $z$. The union bound therefore gives
\[
 \begin{split}
 &\Prob\bigl(\mathcal E_y(a)\cap\mathcal B_y\cap\mathcal G_D
       \mid S=s\bigr)\\
 &\quad\leq\sum_{s_1,\ldots,s_k\mid s}
       \Prob\bigl(\mathcal G_D\cap
       \{\mathcal C_k(\A_D,z,H_0)\}\mid S=s\bigr)\\
 &\quad\leq C_{k,c}\tau(s)^k(1+H_0)^{k-1}D^{-\eta}\\
 &\quad\leq C_{k,c}\exp(k\sqrt{\log T})
                    (2+2\log D)^{k-1}D^{-\eta}.
 \end{split}
\]
The last expression is independent of $s$.
The logarithm of the factors preceding $D^{-\eta}$, apart from
the fixed constant, is
$k\sqrt{\log T}+(k-1)\log(2+2\log D)=o(\log D)$.
For sufficiently large $y$, it is at most $\eta\log D/2$.
Thus the conditional probability is at most
$C_{k,c}D^{-\eta/2}$ for every such $s$.

Average this bound over the realizations of $S$ for which
$\mathcal S_y$ holds, and then add the probabilities of the three
complementary events. This gives
\[
 \Prob(\mathcal E_y(a))
 \leq\Prob(\mathcal S_y^c)+\Prob(\mathcal B_y^c)
      +\Prob(\mathcal G_D^c)+C_{k,c}D^{-\eta/2}.
\]
All four terms tend to zero. This proves
$\Prob(\mathcal E_y(a))\to0$ under \eqref{eq:exponent-choice}.

If $\alpha_k>\beta_k/(1-\beta_k)$, the definition of $\alpha_k$
supplies an exponent $b>\beta_k/(1-\beta_k)$ at which the density-one
property holds. Choose $a$ with $\beta_k/(1-\beta_k)<a<b$.
The first part of the proof gives $\Prob(\mathcal E_y(a))\to1$.
On the other hand,
$a>\beta_k/(1-\beta_k)$ is equivalent to
$a/(1+a)>\beta_k$, so the second part gives
$\Prob(\mathcal E_y(a))\to0$.
This contradiction proves
$\alpha_k\leq\beta_k/(1-\beta_k)$.
Together with \eqref{eq:known-lower}, it completes the proof.
\end{proof}

\section{Concluding Remark}
We recently became aware of the preprint of Mao and Song \cite[Theorems~1.2 and~1.5]{MS}, who also establish the threshold identity in Theorem~\ref{thm:main} and the equality of the weak and strict entropy thresholds. They further show that, for every fixed $k\geq2$ and every $a>\beta_k/(1-\beta_k)$, the integers possessing $k$ distinct divisors in a multiplicative interval of relative length $(\log n)^{-a}$ have natural density zero. Both papers use flag refinement and entropy concavity, and \cite[Theorem~3.6]{MS} gives an affine subset-sum estimate related to Lemma~\ref{lem:translated}. Our argument gives a shorter probabilistic proof of the threshold identity: its arithmetic transfer passes through a random integer with independent geometric prime exponents and a direct coupling with the logarithmic random set.

Theorem~\ref{thm:main} gives the asymptotic equivalence $\alpha_k\sim\beta_k$ as $k\to\infty$.
Indeed, the bound $\alpha_k\leq(\log2)/(k+1)$ of Maier and Tenenbaum \cite{MT09} gives $\alpha_k\to0$, and the theorem gives $\beta_k=\alpha_k/(1+\alpha_k)$.
Consequently, the exponent introduced in \cite{FKTDelta} can be written as 
\[
 \eta_*:=\liminf_{k\to\infty}\frac{\log k}{\log(1/\beta_k)}
       =\liminf_{k\to\infty}\frac{\log k}{\log(1/\alpha_k)}
\]
By Theorem~\ref{thm:main}, this exponent can be defined equally in terms of divisor spacing or equal subset sums.

This exponent occurs in the study of the Erd\H{o}s--Hooley function $\Delta(n)=\sup_{u\in\R}\#\{d\mid n:u<\log d\leq u+1\}$.
Ford, Green and Koukoulopoulos \cite[Theorem~3]{FGK} give the lower bound $\Delta(n)\geq(\log\log n)^{\eta_*-\eps}$ for almost every $n$, for each fixed $\eps>0$.
Ford, Koukoulopoulos and Tao \cite[Theorem~2]{FKTDelta} obtain the corresponding mean-value bound $\sum_{n\leq x}\Delta(n)\gg_\eps x(\log\log x)^{1+\eta_*-\eps}$; see \cite{BTDelta,KTDelta} for upper bounds on the mean value. By Theorem~\ref{thm:strict}, $\eta_*$ can also be expressed using either entropy threshold. Determining the optimal flags and measures would give exact values of $\alpha_k$ for fixed $k$ and determine the exponent $\eta_*$ appearing in these concentration bounds.

\section*{Declaration on the use of AI}
The authors used generative AI tools to assist in discussing proof strategies, checking proofs, and improving exposition. All mathematical arguments, results, and conclusions were reviewed and verified by the authors.

\end{document}